\title[MME for suspension flows]{Measures of maximal entropy for suspension flows over the full shift}
\author{Tamara Kucherenko}\address{Department of Mathematics, The City College of New York, New York, NY, 10031}\email{tkucherenko@ccny.cuny.edu}
\author{Daniel J. Thompson}\address{Department of Mathematics, Ohio State University, Columbus, OH, 43210}\email{thompson.2455@osu.edu}
\date{\today}
\thanks{T.K. is supported by grants from the Simons Foundation \#430032 and from the PSC-CUNY TRADA-47-18. D.T. is supported by NSF grant DMS-$1461163$.}
\subjclass[2010]{37D35, 37B10, 37A35}
\newtheorem{thm}{Theorem}
\newtheorem{lem}[thm]{Lemma}
\newtheorem{prop}[thm]{Proposition}
\theoremstyle{definition}
\numberwithin{equation}{section}
\renewcommand{\L}{\Lambda}
\newcommand{\bN}{{\mathbb N}}
\newcommand{\cA}{{\mathcal A}}
\def\Pb{\ifmmode{\Bbb P}\else{$\Bbb P$}\fi}
\def\Z{\ifmmode{\Bbb Z}\else{$\Bbb Z$}\fi}
\def\Q{\ifmmode{\Bbb Q}\else{$\Bbb Q$}\fi}
\def\C{\ifmmode{\Bbb C}\else{$\Bbb C$}\fi}
\def\R{\ifmmode{\Bbb R}\else{$\Bbb R$}\fi}
\def\H{\ifmmode{\Bbb H}\else{$\Bbb H\Bbb N$}\fi}
\def\Susp{\mathrm{ Susp }}
\def\L{\mathcal L}
\def\Ca{\mathcal C}
\begin{document}

\begin{abstract}

We consider suspension flows with continuous roof function over the full shift $\Sigma$ on a finite alphabet. For any positive entropy subshift of finite type $Y \subset \Sigma$, we explictly construct a roof function such that the measure(s) of maximal entropy for the suspension flow over $\Sigma$ are exactly the lifts of the measure(s) of maximal entropy for $Y$. In the case when $Y$ is transitive, this gives a unique measure of maximal entropy for the flow which is not fully supported. If $Y$ has more than one transitive component, all with the same entropy, this gives explicit examples of suspension flows over the full shift with multiple measures of maximal entropy. This contrasts with the case of a H\"older continuous roof function where it is well known the measure of maximal entropy is unique and fully supported.
\end{abstract}

\maketitle

\setcounter{secnumdepth}{2}

\section{Introduction}
Uniqueness of the measure of maximal entropy (MME) is a central question in the theory of thermodynamic formalism. Many classes of well understood dynamical systems such as Anosov diffeomorphisms or flows, and transitive shifts of finite type, have a unique measure of maximal entropy. It is often a corollary of a uniqueness proof that the MME is fully supported, and this is the case for the systems mentioned above. Finding sufficient conditions for uniqueness of the MME is an active area of research. Recent advances include \cite{BG11, BFSV, rU12, CP16, CT16, GR17}.
Conversely, it is instructive to construct explicit examples with multiple MME, as this can reveal the obstructions to uniqueness. This is the subject of this note. Examples of non-uniqueness of the measure of the maximal entropy for various symbolic dynamical systems include those of Hofbauer \cite{fH77}; Markley and Paul \cite{MP82}; Haydn \cite{nH13}; Savchenko \cite{sS96}; Pavlov \cite{rP16}; Kwietniak, Oprocha and Rams \cite{KOR}; Petersen \cite{kP86} and Krieger \cite{wK74}.

We consider a suspension flow over the full shift $\Sigma$ on a finite alphabet. It is well known that if the roof function $\rho: \Sigma \mapsto (0, \infty)$ is H\"older continuous, then the suspension flow has a unique MME, and this MME is fully supported \cite{PP90}. In contrast, we show that in the class of suspension flows where the roof function $\rho$ is only continuous, the suspension flow may have multiple MME. Even when the MME is unique, it may not be fully supported. These are immediate consequences of our main result, Theorem \ref{main_result}, which states that for any positive entropy subshift of finite type $Y \subset \Sigma$, there exists a continuous function $\rho: \Sigma \mapsto (0, \infty)$ so that the  set of MME for the suspension flow on $\Sigma$ with roof function $\rho$ is exactly the set of lifts of the MMEs for $Y$.
The function $\rho$ is defined explicitly in terms of $Y$, and the result applies equally for suspension semi-flows over the one-sided full shift $\Sigma^+$.
%We say a shift of finite type $Y$ is \emph{non-trivial} if it has uncountably many points (ruling out $Y$ being a union of periodic orbits). This property is equivalent to $Y$ having positive topological entropy and is a necessary assumption. %(we explain why before Proposition \ref{non-uniqueMME}).
The positive entropy assumption rules out $Y$ being a union of periodic orbits and is necessary (we explain why before Proposition \ref{non-uniqueMME}).

 A point of interest is that any two suspension flows over the same shift $\Sigma$ are orbit equivalent. Thus, our analysis shows that uniqueness of the MME is not preserved by orbit equivalence of flows. In \cite{CLT16}, the existence of an orbit semi-equivalence with a constant height suspension flow was combined with a geometric argument to show that the geodesic flow on a compact locally CAT(-1) space has the weak specification property. This argument yields a unique MME for the flow (among other dynamical results). The results of this paper confirm that orbit equivalence is too weak a property to obtain results such as uniqueness of the MME unless there is additional structure to exploit. This is the expected result, but to the best of our knowledge this phenomenon has not been demonstrated explicitly before.

A dynamical system equipped with a potential for which there is not a unique equilibrium state is said to exhibit a \emph{phase transition}. Examples of suspension flows with phase transitions at the zero potential (i.e. examples for which there does not exist a unique MME) have previously been obtained when the alphabet is infinite by Iommi, Jordan and Todd \cite{IJ13, IJT15}, and when the roof function is allowed to have zeroes by Savchenko \cite{vS98}. In these examples, the phase transition occurs because of \emph{non-existence} of an MME rather than non-uniqueness. To the best of our knowledge, examples of phase transitions in the classical case when the alphabet is finite and the roof function is bounded away from zero do not appear in the literature. In this case, the flow is expansive so the existence of MME is guaranteed \cite{BW}. Phase transitions must therefore arise from the presence of multiple MME rather than the mechanisms of \cite{IJT15, vS98}.

Our method of proof is to reduce the problem to considering equilibrium states for potentials on the base $\Sigma$. This well-known strategy is explained in the monograph of Parry and Pollicott \cite{PP90}. It is convenient to consider the one-sided version of the shifts $Y^+ \subset \Sigma^+$. The problem reduces to finding a continuous function $\rho: \Sigma^+ \to (0, \infty)$ so that the topological pressure $P(-\rho)=0$ and the set of equilibrium states for $\rho$ is the set of MME for $Y^+$. This reduction is explained in \S \ref{s.flows} and the proof of Theorem \ref{main_result}.

Markley and Paul \cite{MP82} have proved the striking related result that if $Y^+ \subset \Sigma^+$ is ANY subshift, then there exists a function $\psi: \Sigma^+ \to \mathbb R$ so that the set of equilibrium states for $\psi$ are the set of measures of maximal entropy for $Y^+$.  Markley and Paul's approach, which relies on Israel's theorem on tangent functionals, does not provide any explicit description of $\psi$. Their result does not apply directly to the suspension flow problem considered here because it is unclear how their function $\psi$, which is not characterized explicitly, could be used as the basis for constructing a roof function $\rho>0$ with $P(-\rho)=0$ and such that the equilibrium states for $\psi$ are the same as those for $-\rho$.  %That is what is needed, and the lack of an explicit characterization for $\psi$ makes it unclear how to argue.

%Markley and Paul \cite{MP82} have proved the striking related result that if $Y^+ \subset \Sigma^+$ is ANY subshift, then there exists a function $\psi: \Sigma^+ \to \mathbb R$ so that the set of equilibrium states for $\psi$ are the set of measures of maximal entropy for $Y^+$.  Markley and Paul's approach, which relies on Israel's theorem on tangent functionals, does not provide any explicit description of $\psi$. Due to this lack of an explicit characterization, their result does not apply directly to the suspension flow problem considered here because it is unclear how their function $\psi$ could be used as the basis for constructing a roof function $\rho>0$ with $P(-\rho)=0$ and such that the equilibrium states for $\psi$ are the same as those for $-\rho$.

%Markley and Paul \cite{MP82} have proved the striking related result that if $Y^+ \subset \Sigma^+$ is ANY subshift, then there exists a function $\psi: \Sigma^+ \to \mathbb R$ so that the set of equilibrium states for $\psi$ are the set of measures of maximal entropy for $Y^+$.  However, this result cannot be directly applied to suspension flows because Markley and Paul's approach, which relies on Israel's theorem on tangent functionals, does not provide any explicit description of $\psi$. Due to this lack of an explicit characterization, is is unclear how their function $\psi$ could be used as the basis for constructing a roof function $\rho>0$ with $P(-\rho)=0$ and such that the equilibrium states for $\psi$ are the same as those for $-\rho$.

By contrast, the function $\rho$ that we provide when $Y^+$ is a shift of finite type is completely explicit and we are able to prove the required properties by constructive combinatorial arguments. We guarantee that our roof function $\rho$ is positive by definition, and give explicit pressure estimates to guarantee that $P(-\rho)=0$ and that the equilibrium states for $-\rho$ are the measure(s) of maximal entropy for $Y^+$.

It would be interesting to see how far our finite type hypothesis could be relaxed, and how far it could be relaxed while maintaining an explicit description of $\rho$. It is plausible that the results of this paper hold for any subshift $Y \subset \Sigma$ with positive entropy. However, a different method of proof would be required to establish this, and we suspect that an explicit description of the roof function may no longer be possible at that level of generality.

\section{Preliminaries} \label{s.prelim}
\subsection{Shift spaces.}\label{s.shifts}
We recall some preliminaries about shift spaces (see e.g. \cite{LM95} for more details). In this paper, our analysis takes place in the setting of one-sided shifts, so this is the case we introduce in detail. Let $d\in \bN$ and let $\cA=\{0,\cdots,d-1\}$ be a finite alphabet in $d$ symbols. The \emph{one-sided shift space} $\Sigma^+$ on the alphabet $\cA$ is the set of
all sequences $\xi=(\xi_n)_{n=1}^\infty$ where $\xi_n\in \cA$ for all $n\in \bN$.  We endow $\Sigma^+$ with the Tychonov product topology
which makes $\Sigma^+$ a compact metrizable space. Fixing $\alpha \in (0,1)$, we equip $\Sigma^+$ with the metric
\begin{equation}\label{defmet}
d(\xi,\eta)=\alpha^{\inf\{n\in \bN:\  \xi_n\not=\eta_n\}}
\end{equation}
which induces the Tychonov product topology on $\Sigma^+$.
The shift map $\sigma:\Sigma^+\to \Sigma^+$ defined by $\sigma(\xi)_n=\xi_{n+1}$ is a continuous $d$ to $1$ map on $\Sigma^+$.

For a word $\xi= (\xi_1,\cdots,\xi_n)\in \cA^n$, we denote by $[\xi_1,\cdots,\xi_n]=\{\eta\in \Sigma^+: \eta_1=\xi_1,\cdots, \eta_n=\xi_n\}$ the cylinder generated by $(\xi_1,\cdots,\xi_n)$.
For any two words $\xi=(\xi_1,\cdots,\xi_n)$ with $n\in\bN$ and $\eta=(\eta_1,\cdots,\eta_i)$ with $i\in\bN\cup\{\infty\}$ we denote by $\xi\eta$  their concatenation  $\xi\eta=(\xi_1,\cdots,\xi_n,\eta_1,\cdots,\eta_i)$.

For a continuous function $g:\Sigma^+ \mapsto \R$, the \emph{topological pressure of $g$} is defined by

\begin{equation}\label{Pressure_Def}
P(g)=\lim\limits_{n\to\infty}\frac1n\log \sum_{(\xi_1,...,\xi_n)\in \mathcal{A}^n}\exp{\sup\{S_n g(\eta):\eta\in [\xi_1...\xi_n]\}},
\end{equation}
where $S_n g(\eta)=\sum\limits_{j=0}^{n-1}g(\sigma^j\eta)$. Topological pressure satisfies the variational principle
\begin{equation}\label{varpri}
P(g)=
\sup \left\{h_\mu+\int g\,d\mu\right\},
\end{equation}
where the supremum is taken over all $\sigma$-invariant probability measures and $h_\mu$ denotes the measure-theoretic entropy of $\mu$ (see~\cite{pW82} for details). The measures which realize the supremum are called the \emph{equilibrium states} of $g$.

If $Y\subset \Sigma^+$ is a non-empty closed $\sigma$-invariant set, we  say  that $Y$ is a \emph{subshift}. For a subshift $Y\subset\Sigma^+$ we denote by $\L_n = \L_n(Y)$ the set of all admissible words in $Y$ of length $n$, i.e.
$$\L_n(Y)=\{(\xi_1,\cdots,\xi_n)\in \cA^n:[\xi_1,\cdots,\xi_n]\cap Y\ne\emptyset\}.$$
A \emph{subshift of finite type (SFT)} is a subshift which can be described by a finite set of forbidden words, i.e. words which do not appear in the subshift. We say that an SFT is \emph{$M$-step} if it can be described by a collection of forbidden words of length at most $M+1$. We can assume without loss of generality that every forbidden word for an $M$-step SFT has length exactly $M+1$ by declaring a word of length $M+1$ to be forbidden if it contains a forbidden subword.

%A \emph{subshift of finite type (SFT)} is a subshift which can be described by a finite set of forbidden words, i.e. words which do not appear in the subshift. We say that an SFT is M-step if it can be described by a collection of forbidden words all of which have length M+1. Any subshift of finite type can be converted to an M-step shift where M+1 is the length of the longest forbidden word. We simply declare a word of length M+1 forbidden if it contains a forbidden sub-word.
%A 1-step SFT can be described using a transition matrix. Precisely, a \emph{transition matrix} for a 1-step SFT is a $d\times d$ matrix $A$ with values in $\{0,1\}$ such that $Y=\{\xi\in \Sigma^+: A_{\xi_n \xi_{n+1}}=1 \text{ for all } n \geq 1\}$. Given a $1$-step SFT $Y$, a transition matrix can be obtained by letting $A_{ij}=1$ if and only if $ij \in \L_n(y)$. Conversely, a $d\times d$ matrix $A$ with values in $\{0,1\}$ defines a $1$-step SFT on $d$ symbols.

A \emph{transition matrix} for a 1-step SFT is a $d\times d$ matrix $A$ with values in $\{0,1\}$ such that $Y=\{\xi\in \Sigma^+: A_{\xi_n \xi_{n+1}}=1 \text{ for all } n \geq 1\}$. By \cite[Proposition 2.3.9]{LM95}, a shift space is a 1-step SFT if and only if it can be described this way (up to renaming symbols). Furthermore, an SFT can always be described by an essential graph \cite[Proposition 2.2.10]{LM95}. This allows us to assume that $A_{ij}=1$ implies that $A_{jk}=1$ for some $k$ in the alphabet $\cA$; roughly the idea is that otherwise we could set $A_{ij}=0$, which does not change $Y$, and repeat this process until we arrive at a new transition matrix with the desired assumption. We assume without loss of generality that the transition matrices $A$ considered in this paper satisfy this property.

%Given a $1$-step SFT $Y$, a transition matrix can be obtained by letting $A_{ij}=1$ if and only if $ij \in \L_n(y)$. Conversely, a $d\times d$ matrix $A$ with values in $\{0,1\}$ defines a $1$-step SFT on $d$ symbols.

Suppose $Y$ is a subshift and $N\in\bN$. We can use the set of words in $\L_{N}(Y)$ as an alphabet, and define the $N$th higher block map $\beta_N$ from $Y$ into the full shift over the alphabet $\L_{N}(Y)$ in the following way. For $\xi\in Y$ the $i$th coordinate of $\beta_N(\xi)$ is a block of coordinates of $\xi$ of length $N$ starting at the position $i$. Then $\beta_N$ is a conjugacy map between $Y$ and the image $\beta_N(Y)$. In particular, any $M$-step SFT is conjugate to a 1-step SFT via the $M$th higher block map. %Hence, without loss of generality, we may restrict our considerations to 1-step SFTs.

The \emph{topological entropy} of a subshift $Y$ is given by $$h(Y)=\lim\limits_{n\to\infty}\frac1n \log   |\L_n(Y)| = \sup \{ h_\mu\},$$ where the supremum is taken over invariant probability measures supported on $Y$.
When $Y$ is a $1$-step SFT with transition matrix $A$, the entropy is $\log \lambda_A$, where $\lambda_A$ is the spectral radius of $A$. It follows from the  Perron-Frobenius theorem that
\begin{equation}\label{Perron-Frobenius}
C_1\lambda_A^n\le |\L_n(Y)|\le C_2\lambda_A^n,
\end{equation}
 for some positive constants $C_1$ and $C_2$.

 A \emph{measure of maximal entropy (MME)} for $Y$ is an invariant measure $\mu$ supported on $Y$ that satisfies $h(Y) = h_\mu$. For a transitive SFT, the transition matrix is irreducible, and there is a unique MME called the \emph{Parry measure} which can be constructed explicitly using the data contained in the transition matrix. When the SFT  $Y$ is not transitive, there are transitive components $Y_1, \ldots Y_n$ corresponding to  irreducible components of the transition matrix. At least one of the transitive SFT's $Y_i$ satisfies $h(Y_i)=h(Y)$. %Each $Y_i$ is a transitive subshift of finite type. % and thus has the Parry measure $\mu_i$ as its unique MME.
 Thus, there is an MME $\mu_i$ for $Y$ supported on each $Y_i$ for which $h(Y_i)=h(Y)$, where $\mu_i$ is the Parry measure for $Y_i$.
The shift of finite type on the alphabet $\{0,1, 2, 3\}$ which is the union of the full shift on $\{0,1\}$ and the full shift on $\{2,3\}$ gives a simple example of an SFT with two MMEs.

The \emph{two-sided shift space} $\Sigma$ is the set of all sequences $\xi=(\xi_n)_{n=-\infty}^\infty$ where $\xi_n\in \cA$ for all $n\in \mathbb Z$. A transition matrix $A$ defines a two-sided 1-step subshift of finite type $Y \subset \Sigma$ by  $Y=\{\xi\in \Sigma: A_{\xi_n\xi_{n+1}}=1 \text{ for all } n \in \mathbb Z\}$ analogously to the one-sided case. %The discussion above applies to the two-sided shifts. In particular,

To define pressure for two-sided shifts, it suffices to use \eqref{Pressure_Def} verbatim, i.e. by summing over cylinder sets $[\xi_1,\cdots,\xi_n]=\{(\eta_n)_{n=-\infty}^{\infty}\in \Sigma: \eta_1=\xi_1,\cdots, \eta_n=\xi_n\}$.
%To see that the partition sums based on one-sided and two-sided central cylinders are comparable, we use the fact that there is exactly one periodic point of period $n$ in each $n$-cylinder. By a modulus of continuity argument, we can then compare the value of the Birkhoff sum attained on the $1$-sided and $2$-sided versions of a central cylinder (see e.g. \cite{PP90} for details of this argument).
%The pressure of a H\"older-continuous function is, in turn, completely determined by the values on periodic orbits (see e.g. \cite{PP90}).
Using this formulation of topological pressure in the $2$-sided case, it is easy to see that given a function $g: \Sigma^+ \to \mathbb R$, there is a natural function $\tilde g: \Sigma \to \mathbb R$ with the same range as $g$ and having the same pressure. We simply set $\tilde{g}(\xi)=g((\xi_n)_{n=1}^\infty)$ for $\xi=(\xi_n)_{n=-\infty}^\infty\in \Sigma$.

\subsection{Suspension Flows.}\label{s.flows}
Let us recall some basic facts about suspension flow that can be found in the book by Parry and Pollicott \cite{PP90}. Let $ \rho:\Sigma\to\mathbb{R}$ be a continuous positive function bounded away from zero.
We define the \emph{suspension space} (relative to $\rho$) as
$$\Susp(\Sigma, \rho) = \{(\xi,s):~ \xi\in\Sigma,\,\, 0\le s\le\rho(\xi)\},$$
where we identify $(\xi,\rho(\xi))=(\sigma\xi,0)$.
The \emph{suspension flow} over $\Sigma$ with roof function $\rho$ is the flow $\Phi = (\varphi_t)_{t\in \mathbb R}$ on $\Susp(\Sigma, \rho)$ defined locally by
$$\varphi_t(\xi,s)=(\xi, s+t)\quad\text{whenever}\quad s+t\in[0,\rho(\xi)].$$ Recall that there is a standard way to lift a $\sigma$-invariant measure $\mu$ on $(\Sigma, \sigma)$ to a $\Phi$-invariant measure $\tilde \mu $ on $\Susp(\Sigma, \rho)$, that every $\Phi$-invariant measure arises this way,  and the Abramov  formula tells us that $$\displaystyle h_{\tilde{\mu}}=\frac{h_\mu}{\int\rho\,d\mu}.$$ Furthermore, it can easily be seen that $\tilde \mu_1 = \tilde \mu_2$ if and only if $\mu_1=\mu_2$. Let $P(-c\rho)$ be the pressure considered in the base. Since the function $c \to P(-c \rho)$ is clearly strictly decreasing and tends to $\infty$ (resp. $-\infty$) as $c \to -\infty$ (resp. $\infty$), there exists a unique $c$ with $P(-c \rho)=0$. Now let $\mu$ be an equilibrium state for $-c\rho$ (this exists by expansivity of $(\Sigma, \sigma)$). Then
\[
0=h_\mu + \int-c\rho d\mu \geq h_\nu + \int-c\rho d\nu
\]
for any $\sigma$-invariant $\nu$ with equality if and only if $\nu$ is an equilibrium state for $-c\rho$. Thus
\[
c=\frac{h_\mu}{\int \rho d \mu} \geq \frac{h_{\nu}}{\int \rho d \nu}.
\]
That is, $c= h_{\tilde \mu} \geq h_{\tilde \nu}$ with equality whenever $\nu$ is an equilibrium state for $-c\rho$. It follows that $c$ is the entropy of the suspension flow on $\Susp(\Sigma, \rho)$, and any measure of maximal entropy for the suspension flow corresponds to an equilibrium state of $-c\rho$ on the base transformation $(\Sigma, \sigma)$.

\subsection{Sub-additive sequences}
Recall that a sequence of real numbers $(b_n)$ is called sub-additive if $b_{i+j}\le b_i+b_j$ for any $i,j\in\bN$. We will use the following property of sub-additive sequences at a technical stage of our analysis. It appears to be a general fact about sub-additive sequences, but we were unable to locate a reference in the literature and hence provide our own proof here.

\begin{lem}\label{sub-additive} Suppose $(b_n)$ is a sub-additive sequence. Then for any $n,k\in\mathbb{N}$ with $k\le n$ we have
$$n[b_1+...+b_{n-1}]+k b_n\ge n b_{1+...+(n-1)+k}$$
\end{lem}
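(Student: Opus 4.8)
The plan is to deduce the inequality by summing $n$ separate applications of sub-additivity to the single quantity $b_m$, where $m := 1 + 2 + \cdots + (n-1) + k$. For any decomposition $m = a_1 + a_2 + \cdots + a_r$ with parts $a_j \in \bN$, iterated sub-additivity gives $b_m \le b_{a_1} + \cdots + b_{a_r}$. So it suffices to produce $n$ decompositions of $m$ into parts drawn from $\{1, 2, \ldots, n\}$ with the property that, counted across all $n$ of them together, each value $i \in \{1, \ldots, n-1\}$ occurs exactly $n$ times and the value $n$ occurs exactly $k$ times. Summing the corresponding $n$ inequalities $b_m \le \sum_j b_{a_j}$ then yields $n b_m \le n(b_1 + \cdots + b_{n-1}) + k b_n$, which is the assertion. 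It is essential that the occurrence counts match the target coefficients \emph{exactly}: the terms $b_i$ need not be non-negative, so no slack in the counts can be tolerated, and in particular no part larger than $n$ may be used.

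Producing these decompositions is equivalent to partitioning the multiset $M = \{1^{(n)}, 2^{(n)}, \ldots, (n-1)^{(n)}, n^{(k)}\}$, of total weight $n(1 + \cdots + (n-1)) + kn = nm$, into $n$ blocks each of weight $m$. The case $k = n$ is immediate and serves as a guide: each block is simply $\{1, 2, \ldots, n\}$, which recovers the familiar bound $b_{1 + \cdots + n} \le b_1 + \cdots + b_n$. For $k < n$ I would place the $k$ copies of the value $n$ into $k$ distinct blocks and fill the remainder of every block from the values $1, \ldots, n-1$ (each available $n$ times), holding the $n$ copies of $1$ in reserve as unit fillers so that each block can be brought up to the exact weight $m$.

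The crux is this balanced construction: I must fill every block to weight exactly $m$ while respecting the global supply, and since exact equal-weight partition is a delicate problem in general the argument has to exploit the specific arithmetic of $M$. I would build the blocks one at a time, at each stage completing the current block to weight exactly $m$ by adjoining the largest still-available parts that fit and then topping off with copies of $1$; the reserved supply of $n$ ones is what guarantees that the precise weight $m$ can always be reached. The point that genuinely needs proof — and that I expect to be the main obstacle — is that this procedure never exhausts a needed part prematurely, i.e.\ that the residual multiset always retains enough small parts (in particular enough $1$'s) to complete every remaining block. Verifying this requires tracking the counts carefully, and it is here that the abundance of small values, equivalently the pairing $i + (n-i) = n$ among the available parts, does the work.
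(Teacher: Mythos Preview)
Your reduction is sound: setting $s = 1 + 2 + \cdots + (n-1) + k$, the inequality $n b_s \le n(b_1 + \cdots + b_{n-1}) + k b_n$ follows at once from sub-additivity provided one can exhibit $n$ decompositions of $s$ into parts from $\{1,\ldots,n\}$ which jointly use each $i \le n-1$ exactly $n$ times and the value $n$ exactly $k$ times --- equivalently, provided the multiset $\{1^{(n)},\ldots,(n-1)^{(n)},n^{(k)}\}$ admits a partition into $n$ blocks each of weight $s$. This reformulation is correct, and the partition does in fact always exist, so the strategy is viable; what it buys is a transparent combinatorial picture of why the inequality holds.

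But you have not proved that the partition exists. You propose a greedy fill (largest parts first, with the $n$ copies of $1$ reserved as unit filler) and yourself flag its verification as ``the main obstacle.'' That the ones suffice \emph{in total} is just the weight identity; the real question is whether greedy ever strands a part of size $\ge 2$, which would force more than $n$ unit fillers across the blocks. The pairing $i + (n-i) = n$ you invoke does not by itself settle this, and I do not see a verification of the greedy procedure that is shorter than proving the lemma from scratch. As written, then, the proposal is a correct reduction with the essential combinatorial step left open.

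The paper bypasses the packing problem entirely, arguing by strong induction on $n$ with a Euclidean-style descent. For $k < n$ one writes $n = \ell k + m$ with $m = n \bmod k$; $\ell$ successive regroupings --- each extracting a summand $k b_s$ via sub-additivity applied to a telescoping collection of terms --- reduce the left side to $\ell k\, b_s + m[b_1+\cdots+b_{n-1}] + k b_m$, and the inductive hypothesis for the smaller pair $(m,\, k \bmod m)$ then bounds the residual by $m b_s$, giving $(\ell k + m)b_s = n b_s$. If you attempt to construct your equal-weight partition recursively, you will most likely rediscover this same Euclidean descent in combinatorial dress.
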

\begin{proof} Clearly, the statement is true for $n=1$. We will proceed by induction. Suppose for any $m<n$ we have
$$m[b_1+...+b_{m-1}]+r b_m\ge m b_{1+...+(m-1)+r},$$
whenever $r\le m$. We will show that then the statement is also true for $n$. Note that for $n=k$ the statement follows immediately from the sub-additivity property of the sequence. We pick any $k< n$ and set $m=n\mod k$. Then $n=lk+m$ for some $l\in\mathbb{N}$. To simplify the notation, let $s=1+...+(n-1)+k$. We rearrange the terms and obtain
\begin{align*}
  n[b_1+...+b_{n-1}]+k b_n & = k[b_1+...+b_{n-1}+b_{n}]+(n-k)[b_1+...+b_{n-1}]\\
  & = k[b_1+...b_{n-k-1}+b_{n-k+1}+b_{n}]+k b_{(n-k)}\\
   &\hspace{5cm}+(n-k)[b_1+...+b_{n-1}]\\
  & \ge k  b_{s}+k b_{(n-k)}+(n-k)[b_1+...+b_{n-1}]
  %&= k  b_{s}+k[b_1+...b_{n-2k-1}+b_{n-2k+1}+b_{n-1}]+k b_{(n-k)}+(n-k)[b_1+...+b_{n-1}]
\end{align*}
If $l=1$ then $n-k=m$ and we stop. If not, we apply a similar rearrangement to the terms of $k b_{(n-k)}+(n-k)[b_1+...+b_{n-1}]$.
\begin{align*}
  (n-k)[b_1+...+b_{n-1}]+k b_{(n-k)} &= k[b_1+...+b_{n-1}+b_{n-k}]+(n-2k)[b_1+...+b_{n-1}]\\
  & = k[b_1+...b_{n-2k-1}+b_{n-2k+1}+b_{n-1}]+k b_{(n-2k)}\\
  &\hspace{4cm}+(n-2k)[b_1+...+b_{n-1}]\\
  & \ge k  b_{s}+k b_{(n-2k)}+(n-2k)[b_1+...+b_{n-1}]
\end{align*}
After repeating this process $l$ times we obtain
 \begin{equation}\label{b_n}
 n[b_1+...+b_{n-1}]+k b_n\ge lkb_s+m[b_1+...+b_{n-1}]+kb_m.
 \end{equation}
Since $m<k$, we can set $r=k\mod m$ and write $k=tm+r$. By inductive hypothesis we have $m[b_1+...+b_{m-1}]+r b_m\ge mb_{1+...+(m-1)+r}$. Therefore,
\begin{align*}
  m[b_1+...+b_{n-1}]+kb_m & =m[b_1+...+b_{n-1}+tb_m]+rb_m \\
   & =m[b_1+...+b_{m-1}]+m[b_m+...+b_{n-1}+tb_m]+rb_m\\
   &\ge mb_{1+...+(m-1)+r} + m[b_m+...+b_{n-1}+tb_m]\\
   &\ge mb_{1+...+(n-1)+tm+r}\\
   &= mb_s
\end{align*}
Combining the last inequality with (\ref{b_n}) completes the proof.
\end{proof}

\section{Main Result}
%In this section, we prove our main result, which we restate here for convenience.
\begin{thm}\label{main_result}
Let $\Sigma$ be the full shift on a finite alphabet, and let $Y \subset \Sigma$ be any positive entropy subshift of finite type. There exists a continuous function $\rho: \Sigma \to (0, \infty)$ so that the  set of MME for the suspension flow on $\Susp(\Sigma, \rho)$ is exactly the set of lifts to $\Susp(\Sigma, \rho)$ of the MMEs for the subshift of finite type $Y$.
\end{thm}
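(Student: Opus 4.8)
The plan is to apply the reduction of \S\ref{s.flows} together with the one-sided/two-sided correspondence recorded at the end of \S\ref{s.shifts}. By these, it suffices to build a continuous $\rho\colon\Sigma^+\to(0,\infty)$ with $P(-\rho)=0$ whose set of equilibrium states is exactly the set of MME for the corresponding one-sided SFT $Y^+\subset\Sigma^+$. Indeed, if $P(-\rho)=0$ then the constant $c$ of \S\ref{s.flows} equals $1$, so $c$ is the entropy of the flow and the flow MME are precisely the lifts of the equilibrium states of $-\rho$; transferring $\rho$ to the two-sided shift via $\tilde\rho$ preserves the pressure and the equilibrium states, and lifting to the suspension completes the reduction. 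I will use only that $Y$ is $M$-step, so that admissibility of a prefix $(\xi_1,\dots,\xi_n)$ is a local condition, and that $|\L_n(Y^+)|$ obeys the bound \eqref{Perron-Frobenius} with $\lambda_A=e^{h(Y)}$; crucially I do not recode $\Sigma^+$, which must remain the full shift.

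Next I would define $\rho=h(Y)+f$, where $f\colon\Sigma^+\to[0,\infty)$ vanishes on $Y^+$ and, off $Y^+$, is a decreasing function $g$ of the length $N(\xi)$ of the longest admissible prefix of $\xi$ (the largest $n$ with $(\xi_1,\dots,\xi_n)\in\L_n(Y^+)$), chosen so that $g(N)\to 0$ as $N\to\infty$. Since nearby points share long prefixes and proximity to $Y^+$ forces $N$ to be large, this makes $f$, and hence $\rho$, continuous; positivity is immediate from $h(Y)>0$ and $f\ge 0$. Because $g$ depends on infinitely many coordinates and decays to $0$, the resulting $\rho$ is continuous but not H\"older, which is exactly what is needed to escape the uniqueness conclusion of \cite{PP90}. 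The delicate point is the calibration of the decay of $g$: writing $b_n=\log|\L_n(Y^+)|$, I would choose $g$ so that $\sum_{s=1}^{L}g(s)$ dominates the sub-additive defect $b_L-Lh(Y)$ while still $g(s)\to 0$, and Lemma \ref{sub-additive} is the tool that makes such a Ces\`aro-type choice possible.

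The verification has two halves. For $P(-\rho)=0$, the lower bound is immediate from the variational principle \eqref{varpri}, since any MME $\mu$ of $Y$ gives $h_\mu-\int\rho\,d\mu=h(Y)-h(Y)=0$. For the upper bound I estimate the sum in \eqref{Pressure_Def} directly: a fully admissible word can be completed by an admissible tail on which $f$ vanishes, so its infimal Birkhoff sum is $nh(Y)$ and the admissible words contribute $|\L_n(Y^+)|e^{-nh(Y)}=O(1)$ by \eqref{Perron-Frobenius}; a word carrying forbidden sub-words accrues a penalty of roughly $\sum_i\big(\sum_{s=1}^{L_i}g(s)\big)$ over its maximal admissible runs $L_1,\dots,L_r$, and grouping inadmissible words by their run-length data and summing the penalized weights shows, via the choice $\sum_{s\le L}g(s)\ge b_L-Lh(Y)$ and Lemma \ref{sub-additive}, that their total contribution is subexponential. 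Hence $P(-\rho)=0$. To pin down the equilibrium states I use that $\mu\mapsto h_\mu-\int\rho\,d\mu$ is affine, so it is enough to examine ergodic $\nu$. If $\nu(Y^+)=1$ then $\int f\,d\nu=0$ and $h_\nu-\int\rho\,d\nu=h_\nu-h(Y)\le 0$, with equality exactly for the MME of $Y$; if $\nu(Y^+)=0$, the same calibration gives $\int f\,d\nu> h_\nu-h(Y)$, so $\nu$ is not an equilibrium state. Taking convex combinations recovers all equilibrium states as the lifts of the MME of $Y$.

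The main obstacle is precisely this calibration and its pressure estimate. Continuity of $\rho$ forces $f\to 0$ along orbits that dwell in $Y^+$ ever longer, yet I must simultaneously guarantee that $\int f\,d\nu$ beats the entropy surplus $h_\nu-h(Y)$ for every measure charging the complement of $Y^+$ and that the penalized count of inadmissible words of length $n$ stays below $e^{o(n)}\lambda_A^n$. Reconciling these competing demands is what dictates the exact decay rate of $g$ in terms of $b_n=\log|\L_n(Y^+)|$, and the combinatorial bookkeeping of how penalties accumulate over successive admissible runs---controlled by the inequality of Lemma \ref{sub-additive}---is the technical heart of the argument.
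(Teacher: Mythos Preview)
Your plan follows the paper's route almost exactly: reduce to finding $g=-\rho$ on the one-sided shift with $P(g)=0$ and $g\equiv -h(Y)$ on $Y^+$, define $g$ as a function of the length of the maximal admissible prefix, and bound the partition sum by decomposing words into maximal admissible runs. The one structural difference is that the paper first passes through the $M$th higher block map to reduce to a $1$-step SFT inside a larger full shift, applies Proposition~\ref{non-uniqueMME} there, and then pulls the resulting function back to $\Sigma$; your choice to work directly with the $M$-step SFT on the original alphabet is harmless, since the prefix-length function and the run decomposition make sense equally well in that setting.

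There is, however, a genuine gap in your calibration. Requiring only $\sum_{s=1}^{L}g(s)\ge b_L-Lh(Y)$ is not enough to force the inadmissible contribution to be subexponential. With that bare inequality, a word of length $n$ with maximal admissible runs $L_1,\dots,L_r$ contributes at most $\prod_i|\L_{L_i}(Y)|\exp\bigl(-nh(Y)-\sum_i(b_{L_i}-L_ih(Y))\bigr)=1$, and summing over the $2^{n-1}$ compositions of $n$ gives exponential growth of $Z_n(-\rho)$, hence $P(-\rho)>0$. What makes the argument go through in the paper is an \emph{extra} summable margin: the explicit choice $a_j=\tfrac{1}{n}\log|\L_n(Y)|+\tfrac{c}{\sqrt{j}}$ (for the appropriate $n$) yields, via Lemma~\ref{sub-additive}, the stronger bound $\sum_{j\le L}a_j\ge\log|\L_L(Y)|+c\sqrt{L}$, so each run of length $L$ carries an additional penalty $e^{-c\sqrt{L}}$, and it is precisely the inequality $\sum_{L\ge 1}e^{-c\sqrt{L}}<1$ (for $c>\max\{2,h(\Sigma^+)\}$) that makes the recursive estimate $Q(r)\le 1$ close up. In your write-up you should make this margin explicit; Lemma~\ref{sub-additive} handles the $b_L$ part of the sum, but the $c/\sqrt{j}$ term is what actually kills the combinatorics of multiple runs. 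The same strict margin is also what you would need to justify your claim that an ergodic $\nu$ with $\nu(Y^+)=0$ satisfies $\int f\,d\nu>h_\nu-h(Y)$; as stated, that inequality does not follow from the bare condition $\sum g(s)\ge b_L-Lh(Y)$.
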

The positive entropy assumption on $Y$ is essential. For an SFT, zero entropy is equivalent to the shift space containing only countably many points. For an irreducible SFT, zero entropy is equivalent to the shift being a finite union of periodic orbits. Given any suspension flow over $\Sigma$, any MME for a zero entropy subshift $Y$ lifts to a zero entropy measure for the flow. % by Abramov's formula.
Since the flow has positive entropy, such measures cannot be MME.
%The non-triviality assumption on $Y$ is essential. In the case $Y$ is trivial (i.e. has zero entropy), the MME for $Y$ are periodic orbit measures. These measures lift to periodic orbits for the suspension flow, which cannot be MME for any suspension flow over $\Sigma$, since the flow has positive entropy.

The key ingredient for proving Theorem \ref{main_result} is the following proposition.

\begin{prop}\label{non-uniqueMME}
Suppose $\Sigma^+$ is a one-sided full shift on a finite alphabet and $Y\subset\Sigma^+$ is a $1$-step subshift of finite type with positive entropy $h(Y)$. Let $$a_j=\frac1n\log |\L_n(Y)|+\frac{c}{\sqrt{j}}\quad\text{for}\quad \frac{n(n-1)}{2}\le j< \frac{n(n+1)}{2},$$ where $|\L_n(Y)|$ is the number of words in $\L(Y)$ of length $n$ and $c$ is a constant greater than $\max\{2, h(\Sigma^+)\}$.
Define \begin{equation*}
g(\xi)=\left\{
\begin{aligned}
    -h(Y),&\text{ if }\, \xi\in Y\\
    -a_n,\quad&\text{ if }\, d(\xi, Y)=\alpha^{n+1},\,n\in\bN\\
    -a_1,\quad&\text{ if }\, d(\xi, Y)=\alpha,\\
\end{aligned}
\right.
\end{equation*}
where $\alpha$ is the constant in \eqref{defmet}. Then $g:\Sigma^+\mapsto \R$ is a continuous negative function which is bounded away from zero and has topological pressure $P(g)=0$.
\end{prop}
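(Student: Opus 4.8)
The plan is to establish the three elementary properties of $g$ together with the lower bound $P(g)\ge 0$ quickly, and then devote the bulk of the work to the upper bound $P(g)\le 0$. For the elementary part, note that $\log|\L_m(Y)|$ is subadditive in $m$, so $\tfrac1m\log|\L_m(Y)|$ decreases to $h(Y)$ and in particular $\tfrac1m\log|\L_m(Y)|\ge h(Y)$; hence every $a_j>h(Y)>0$ and $a_j\to h(Y)$ as $j\to\infty$. Thus $g$ attains its maximum value $-h(Y)$ exactly on $Y$, is strictly smaller off $Y$, and its values on the shells converge to $-h(Y)$ as $d(\xi,Y)\to 0$, giving continuity; negativity and boundedness away from $0$ are then immediate. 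For the lower bound I would take $\mu$ to be the Parry measure of a transitive component of $Y$ with $h(Y_i)=h(Y)$: it is $\sigma$-invariant, supported on $Y$, with $h_\mu=h(Y)$ and $\int g\,d\mu=-h(Y)$, so the variational principle \eqref{varpri} yields $P(g)\ge h_\mu+\int g\,d\mu=0$.

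The upper bound requires a combinatorial description of $\sup_{\eta\in[w]}S_n g(\eta)$. For $\xi\in\Sigma^+$ let $r(\xi)$ denote the length of the longest prefix of $\xi$ admissible in $Y$, so that $g(\xi)=-a_{\max(r(\xi),1)}$ off $Y$. Because $Y$ is a $1$-step SFT, a maximal admissible block can only be terminated by a single forbidden transition (a \emph{fault}); consequently $r(\sigma^j\eta)$ decreases by exactly one at each step until a fault is passed, where it jumps. The optimal $\eta\in[w]$ therefore retains $w$ and extends it by an admissible tail, adding no new faults, so the faults of the maximizer are precisely the forbidden transitions already present in $w$. Writing these faults as $f_1<\dots<f_p$, the sum $S_n g$ splits over the \emph{excursions} between consecutive faults: an excursion of length $L$ runs through $r=L,L-1,\dots,1$ and contributes $-\sum_{r=1}^{L}a_r$, while the admissible suffix after $f_p$ dives into $Y$ and contributes $-h(Y)$ per coordinate.

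The heart of the matter is to bound the pressure sum \eqref{Pressure_Def} by organizing words according to their fault structure. Words with no fault are exactly the $Y$-admissible words; there are $|\L_n(Y)|$ of them, each with $\sup_{[w]}S_n g=-nh(Y)$, so by \eqref{Perron-Frobenius} they contribute $O(1)$. For a word with excursions of lengths $L_1,\dots,L_p$ and an admissible suffix, the number of choices on an excursion of length $L$ is at most $|\L_L(Y)|$ and the suffix contributes a bounded factor by \eqref{Perron-Frobenius}, so one excursion of length $L$ contributes at most $|\L_L(Y)|\exp(-\sum_{r=1}^{L}a_r)$. Here I would apply Lemma \ref{sub-additive} with $b_m=\log|\L_m(Y)|$: summing the entropy parts $\tfrac1m\log|\L_m(Y)|$ of the $a_r$ over $r\le L$ and comparing with $\log|\L_L(Y)|$ through the lemma (which is adapted precisely to the triangular-block indexing built into the definition of $a_j$) gives an inequality of the form
\[
\sum_{r=1}^{L}a_r \ \ge\ \log|\L_L(Y)|-\log|\L_1(Y)|+c\sum_{r=1}^{L}\frac{1}{\sqrt r},
\]
so the per-excursion factor is at most $|\L_1(Y)|\exp(-c\sum_{r\le L}r^{-1/2})\le d\,e^{-c\sqrt L}$. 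Summing these weights over all excursion configurations and over the suffix length, and using $c>\max\{2,h(\Sigma^+)\}=\max\{2,\log d\}$ to make the total summable, shows the entire sum is subexponential, so $P(g)\le 0$ and hence $P(g)=0$.

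The main obstacle is this final summation. There are exponentially many fault configurations (compositions of the word length $n$), so the subexponential-but-summable decay $e^{-c\sqrt L}$ supplied by the $c/\sqrt j$ terms must be strong enough to defeat the entropy of these configurations, and this is exactly where the size of $c$ enters decisively. The function of Lemma \ref{sub-additive} is to cancel the leading order of $\sum_{r\le L}a_r$ against $\log|\L_L(Y)|$ so that only the $c/\sqrt j$ slack remains. I expect the genuinely delicate point to be making the constant close out with the stated threshold $c>\max\{2,\log d\}$ rather than a larger one: this probably requires exploiting the coupling at faults—each fault is a truly forbidden transition, constraining the first symbol of the next excursion—instead of bounding every excursion independently by $|\L_L(Y)|$, so that the spurious $|\L_1(Y)|\le d$ factor in the per-excursion estimate above is absorbed.
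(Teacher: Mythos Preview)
Your approach is essentially the paper's. The paper also organizes the partition sum by the fault structure: it first peels off the $Y$-admissible suffix (your $-(n-r)h(Y)$ term) and is left with a quantity $Q(r)$, the contribution of all length-$r$ words forced to end at a fault; it then proves $Q(r)\le 1$ by strong induction, peeling off the \emph{first} excursion at each step. This recursion is exactly the renewal form of your product-over-excursions decomposition, so the two arguments are the same in substance.

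Where you diverge from the paper is in the execution of the key estimate, and your diagnosis of the difficulty is off. Your application of Lemma~\ref{sub-additive} is too weak: the lemma, applied with $b_m=\log|\L_m(Y)|$ and the triangular indexing built into the $a_j$, gives
\[
\sum_{j=1}^{L} a_j \ \ge\ \log|\L_L(Y)|+c\sum_{j=1}^{L}\frac{1}{\sqrt j}\ \ge\ \log|\L_L(Y)|+c\sqrt L,
\]
with \emph{no} $-\log|\L_1(Y)|$ correction. Hence for an excursion of length $L\ge 2$ (where the excursion word genuinely lies in $\L_L(Y)$, using the essential-graph convention) the per-excursion factor is simply $e^{-c\sqrt L}$, with no $d$. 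The only place a $d$ appears is the $L=1$ excursion, where the count is $|\cA|$ rather than $|\L_1(Y)|$; that term is
\[
|\cA|\,e^{-a_1}=\frac{|\cA|}{|\L_1(Y)|}\,e^{-c}<\frac{1}{|\L_1(Y)|}\le \frac12,
\]
using $c>\log|\cA|$ and $|\L_1(Y)|\ge 2$ (from $h(Y)>0$). With this, the total per-excursion weight satisfies $\sum_{L\ge 1}f(L)<\tfrac12+\sum_{L\ge 2}e^{-c\sqrt L}<1$ for $c\ge 2$, and the induction (equivalently your renewal sum) closes to give $Q(r)\le 1$. So the constant does close at the stated threshold, and the mechanism is not ``coupling at faults'' but rather (i) the correct form of the subadditivity estimate and (ii) isolating the $L=1$ term.
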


Assuming this proposition holds, we can prove Theorem \ref{main_result}.

\begin{proof} %We rely on the discussion in \S \ref{s.shifts}.
Suppose the alphabet of $\Sigma$ has $d$ symbols, and $Y$ is an $M$-step SFT. We apply the $M$th higher block map $\beta_M$ to the full shift $\Sigma$ and obtain a conjugate subshift $\beta_M(\Sigma)$ in a full shift $\tilde{\Sigma}$ on $d^{M+1}$ symbols. The image of $Y$ under this map is a 1-step SFT, so we can apply Proposition \ref{non-uniqueMME} to the restriction of $\beta_M(Y)$ to the one-sided shift $\tilde{\Sigma}^+$. We denote by $\tilde{Y}$ the image of $Y$ under the map $\beta_M$ and by $\tilde{Y}^+$ its restriction to the one sided shift. Then, there exists a continuous function $g:\tilde{\Sigma}^+\mapsto (-\infty, 0)$ such that $P(g)=0$ and $g(\xi)=-h(\tilde{Y}^+)$ for any $\xi \in \tilde{Y}^+$.

We extend the function $g$ to the two-sided shift $\tilde{\Sigma}$ in a standard way, i.e. for $\xi=(\xi_n)_{n=-\infty}^\infty\in\tilde\Sigma$ we set $\tilde{g}(\xi)=g((\xi_n)_{n=1}^\infty)$. Clearly, $P(\tilde{g})\le 0$. Since the entropies of $\tilde{Y}$ and $\tilde{Y}^+$ are the same, for any measure $\tilde{\mu}$ on $\tilde{Y}$ with maximal entropy we have $$P(\tilde{g})\ge h_{\tilde{\mu}}+\int \tilde{g}\, d\tilde{\mu}=h(\tilde{Y})-h(\tilde{Y})=0.$$ Hence, the measures of maximal entropy of $\tilde{Y}$ are the equilibrium states of $\tilde{g}$.  The restriction $\tilde{g}_0$ of $\tilde{g}$ to $\beta_M(\Sigma)$ still has zero pressure and, since $\tilde{Y}\subset \beta_M(\Sigma)$, any MME for $\tilde{Y}$ is still an equilibrium state for $\tilde{g}_0$.  Now we can use the conjugacy map $\beta_M$ to pull this function back to the original shift space $\Sigma$.

We define the roof function $\rho:\Sigma\mapsto (0,\infty)$ as $\rho(\xi)= -\tilde{g}_0\circ\beta_M(\xi)$. It follows that $P(-\rho)=0$ and thus any measure $\mu$ which is a MME for $Y$  is an equilibrium state for $-\rho$. In view of the discussion in \S \ref{s.flows}, the lift of the measure $\mu$ will be a measure of maximal entropy for the suspension flow on $\Susp(\Sigma, \rho)$.
\end{proof}

The remainder of this section builds up a proof of Proposition \ref{non-uniqueMME}. Let $A$ be a transition matrix for $Y$ as described in \S \ref{s.shifts}. Recall that $|\L_n(Y)|$ is the number of words of length $n$ which appear in the language of $Y$. Since
\begin{equation*}
h(Y)=\lim\limits_{n\to\infty}\frac1n\log |\L_n(Y)|=\inf_{n} \left\{\frac1n\log |\L_n(Y)|\right\},
\end{equation*}
the function $g$ defined above is continuous and $g(\xi)\le -h(Y)$ for all $\xi\in\Sigma$.  Note that if $\mu$ is a measure of maximal entropy for $Y$ then
\[h_\mu+\int g\,d\mu=h(Y)-h(Y)=0,
\]
and thus by the variational principle, $P(g)\geq0$. The main point of our argument is to show that $P(g)\le 0$.

We compute the pressure $P(g)$ using (\ref{Pressure_Def}). Note that for $\xi\in\Sigma$  and $n>1$ we have that $d_\alpha(\xi, Y)=\alpha^{n+1}$ if and only if $ A_{\xi_i\xi_{i+1}}=1$ for $i<n$ and $A_{\xi_n\xi_{n+1}}=0.$ For $\xi\in\Sigma^+$ such that $A_{\xi_1\xi_{2}}=0$ we may have $d_\alpha(\xi, Y)=\alpha^2$ or $d_\alpha(\xi, Y)=\alpha$. The second situation occurs if some symbols of the alphabet $\cA$ do not appear in $Y$.

To handle this detail, we defined $\rho(\xi)=a_1$ for both cases $d_\alpha(\xi, Y)=\alpha^2$ and $d_\alpha(\xi, Y)=\alpha$. Hence, we have
\begin{equation}\label{def g}
g(\xi)=\left\{
\begin{aligned}
    -a_n,\quad&\text{ if }\, A_{\xi_i\xi_{i+1}}=1 \text{ for } i<n \text{ and } A_{\xi_n\xi_{n+1}}=0\\
    -h(Y),&\text{ if }\, A_{\xi_i\xi_{i+1}}=1 \text{ for all } i\\
\end{aligned}
\right.
\end{equation}

Consider a word $(\xi_1,...,\xi_n)\in\cA^n$. If $A_{\xi_i\xi_{i+1}}=1$ for all $i=1,...,n-1$ then $(\xi_1,...,\xi_n)\in \L_n(Y)$ and hence
\begin{equation}\label{supB_n}
\sup\{S_n g(\eta):\eta\in [\xi_1,...,\xi_n]\}=-nh(Y).
\end{equation}

We turn our attention to words that do not belong to $\L_n(Y)$. We have the following lemma.
\begin{lem}
For any word $(\xi_1,...,\xi_n)\notin\L_n(Y)$, we have
\begin{equation}\label{supS_n_final}
\sup\{S_n g(\eta):\eta\in [\xi_1,...,\xi_n]\}\le S_r g(\xi_1,...,\xi_r,\beta_1,\beta_2,...)-(n-r)h(Y),
\end{equation}
where $r=\max\{i: A_{\xi_i\xi_{i+1}}=0\}$ and $\beta\in \Sigma^+$ is any sequence such that $A_{\xi_r\beta_1}=0$.
\end{lem}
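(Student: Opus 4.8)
The plan is to split the Birkhoff sum $S_n g(\eta)$ at the index $r$ and estimate the two blocks separately. Writing $S_n g(\eta)=\sum_{j=0}^{r-1}g(\sigma^j\eta)+\sum_{j=r}^{n-1}g(\sigma^j\eta)$, the second block has $n-r$ terms, and since $g\le -h(Y)$ everywhere, it is bounded above by $-(n-r)h(Y)$. This accounts for the second summand on the right of \eqref{supS_n_final}, and it is the only place where the free coordinates $\eta_{n+1},\eta_{n+2},\dots$ of $\eta\in[\xi_1,\dots,\xi_n]$ enter the estimate; for the first block I will show there is no dependence on the tail at all.

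The heart of the argument is to identify $\sum_{j=0}^{r-1}g(\sigma^j\eta)$ with $S_r g(\xi_1,\dots,\xi_r,\beta_1,\beta_2,\dots)$. First I would exploit the maximality of $r$: since $r=\max\{i:A_{\xi_i\xi_{i+1}}=0\}$, for each $j\in\{0,\dots,r-1\}$ the shifted sequence $\sigma^j\eta$ already has a forbidden transition at its $(r-j)$th position, namely $A_{\xi_r\xi_{r+1}}=0$. Hence by \eqref{def g} the value $g(\sigma^j\eta)=-a_{m_j}$ is governed by the first-failure index $m_j\le r-j$, which in turn is read off from the coordinates $\xi_{j+1},\dots,\xi_{r+1}$ alone. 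As $r+1\le n$, all of these lie inside the fixed word, so each term $g(\sigma^j\eta)$ with $j<r$ is independent of the tail of $\eta$.

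Next I would compare $\sigma^j\eta$ with $\sigma^j(\xi_1,\dots,\xi_r,\beta)$ term by term. The two sequences agree on the coordinates $\xi_{j+1},\dots,\xi_r$, and each has a forbidden transition at its $(r-j)$th position: for $\sigma^j\eta$ this is $A_{\xi_r\xi_{r+1}}=0$, while for $\sigma^j(\xi_1,\dots,\xi_r,\beta)$ it is $A_{\xi_r\beta_1}=0$ by our choice of $\beta$. Since the shared transitions at positions $1,\dots,r-j-1$ determine whether any earlier failure occurs, and position $r-j$ is a failure for both, the first-failure indices coincide; thus $g(\sigma^j\eta)=g(\sigma^j(\xi_1,\dots,\xi_r,\beta))$ for every $j<r$. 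Summing over $j$ gives $\sum_{j=0}^{r-1}g(\sigma^j\eta)=S_r g(\xi_1,\dots,\xi_r,\beta)$, and combining with the bound on the second block yields \eqref{supS_n_final}; taking the supremum over $\eta\in[\xi_1,\dots,\xi_n]$ finishes the proof.

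The only delicate point, and the step I expect to need the most care in writing, is the localization carried out above: one must verify that the maximality of $r$ forces every first-failure index $m_j$ (for $j<r$) to be detected no later than coordinate $\xi_{r+1}$, so that replacing the tail $\xi_{r+1},\xi_{r+2},\dots$ by $\beta$ leaves each term $g(\sigma^j\eta)$ unchanged. Once this is established, the first block is handled by an exact equality and the second by the crude bound $g\le -h(Y)$, and nothing further is required.
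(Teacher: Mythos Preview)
Your proposal is correct and follows the same overall strategy as the paper: split $S_n g(\eta)$ at the index $r$, show that the first block $\sum_{j=0}^{r-1}g(\sigma^j\eta)$ is constant on the cylinder and equal to $S_r g(\xi_1,\dots,\xi_r,\beta)$ for any $\beta$ with $A_{\xi_r\beta_1}=0$, and then bound the second block. Your justification of the first-block equality via the first-failure index $m_j\le r-j$ is exactly the mechanism the paper uses (phrased there as ``the distance $d_\alpha(\sigma^i\eta,Y)$ for $1\le i<r$ does not depend on the choice of $\eta$'').

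The one genuine, if minor, difference is in the treatment of the second block. The paper distinguishes cases: when $r<n-1$ (or $r=n-1$ with $\xi_n\in\L_1(Y)$) it actually identifies the supremum exactly as $-(n-r)h(Y)$ by exhibiting an optimal tail $\beta\in[\xi_{r+1},\dots,\xi_n]\cap Y$, and only in the residual case $r=n-1$, $\xi_n\notin\L_1(Y)$ does it fall back on the crude inequality $-a_1\le -h(Y)$. You instead apply the uniform bound $g\le -h(Y)$ to all $n-r$ terms at once. Since the lemma only asserts an inequality, your route is sufficient and in fact tidier: it avoids the case split entirely. The paper's extra work buys the observation that \eqref{supS_n_final} is typically an equality, which is not needed downstream.
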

\begin{proof}

For any $\eta\in [\xi_1,...,\xi_n]$ we write the Birkhoff sum as
$$S_n g(\eta)=S_r g(\eta)+S_{n-r}g(\sigma^r\eta).$$

Since $A_{\xi_r\xi_{r+1}}=0$ and $r\le n-1$, the distance $d_\alpha(\sigma^i\eta, Y)$ for $1\le i< r$ does not depend on the choice of $\eta\in [\xi_1,...,\xi_n]$. It follows from the definition of $g$ that the sum $S_r g$ is constant on the cylinder $[\xi_1,...,\xi_n]$.

On the other hand, if $r< n-1$ we necessarily have $(\xi_{r+1},...,\xi_{n})\in\L_{n-r}(Y)$. Note that $(\xi_{r+1},...,\xi_{n})\in\L_{n-r}(Y)$ implies that there exist $\beta\in [\xi_{r+1},...,\xi_{n}]\cap Y$. Then the supremum of $S_{n-r}g(\sigma^r\eta)$ over all $\eta\in [\xi_1,...,\xi_n]$ is attained at $\eta=(\xi_1,...,\xi_r,\beta_1,\beta_2,...)$ and is equal to $-rh(Y)$. Combining these two observations we see that
\begin{equation}\label{supS_n}
\sup\{S_n g(\eta):\eta\in [\xi_1,...,\xi_n]\}=S_r g(\xi_1,...,\xi_r,\beta_1,\beta_2,...)-(n-r)h(Y),
\end{equation}
where we may take any $\beta\in [\xi_{r+1},...,\xi_{n}]\cap Y$. Note further, that the value of the sum $S_r g(\xi_1,...,\xi_r,\beta_1,\beta_2,...)$ does not depend on the particular choice of $\xi_{r+1},...,\xi_{n}$ and stays the same as long as $A_{\xi_r\beta_1}=0$.

When $r=n-1$ then the value of the supremum depends on whether or not the symbol $\xi_n$ appears in $Y$. If $\xi_n\in\L_1(Y)$ then the above reasoning applies and the equality (\ref{supS_n}) holds. If $\xi_n\notin\L_1(Y)$ then for $\eta\in [\xi_1,...,\xi_n]$ the sequence $\sigma^{n-1}(\eta)$ starts with $\xi_n$, which implies $d_\alpha(\sigma^{n-1}(\eta), Y)=\alpha$. In this case, $$S_n g(\eta)=S_{n-1}g(\eta)+S_1 g(\sigma^{n-1})(\eta)=S_{n-1}g(\eta)-a_1\le S_{n-1}g(\eta)-h(Y).$$ Again, the value of the sum $S_n g(\eta)$ does not depend on $\eta\in[\xi_1,...,\xi_n]$ and stays the same even if we replace $\xi_n$ by any other symbol $\beta_1\in \cA$, as long as $A_{\xi_{n-1}\beta_1}=0$.
\end{proof}

We fix $n$ and consider all possible words $(\xi_1,...,\xi_n)\in\cA^n$. We
estimate the partition sum
$$Z_n(g)=\sum_{\xi_1,...,\xi_n\in \mathcal{A}}\exp{\sup\{S_n g(\eta):\eta\in [\xi_1,...,\xi_n]\}}.$$
by grouping the words according to the sizes of end-blocks that are admissible in the language of $Y$.
We consider a subset of the alphabet $\cA_0\subset\cA$, $$\cA_0=\{i\in\cA: \text{ there exists } j\in\cA \text{ with } A_{ij}=0 \}.$$ For each $i\in\cA_0$ we select and fix one element $\beta(i)\in\Sigma^+$ such that $A_{i\beta_1(i)}=0$. It follows from (\ref{supB_n}) and (\ref{supS_n_final}) that
\begin{align*}
  Z_n(g) & =\sum_{\mathclap{\hspace{1cm}(\xi_1,...,\xi_n)\in \L_n}}\exp{\sup\{S_n g(\eta):\eta\in [\xi_1,...,\xi_n]\}}+\sum_{\mathclap{\hspace{1cm}(\xi_1,...,\xi_n)\notin \L_n}}\exp{\sup\{S_n g(\eta):\eta\in [\xi_1,...,\xi_n]\}} \\
   & \le |\L_n(Y)|\exp(-nh(Y)) +\sum_{r=1}^{n-1}\quad\sum_{\mathclap{\substack{\hspace{1cm}\xi_1,...,\xi_{r-1}\in \mathcal{A}\\\quad \xi_r\in\mathcal{A}_0\\\hspace{1cm}(\xi_{r+1},...,\xi_n)\in \L_{n-r}(Y)}}}\exp[S_r g(\xi_1,...,\xi_r,\beta(\xi_r))-(n-r)h(Y)]\\
   &=\begin{aligned}[t] |\L_n(Y)|&\exp(-nh(Y)) \\
                       &+\sum_{r=1}^{n-1}|\L_{n-r}(Y)|\exp(-(n-r)h(Y))\,\,\sum_{\mathclap{\substack{\hspace{1cm}\xi_1,...,\xi_{r-1}\in \mathcal{A}\\\quad \xi_r\in\mathcal{A}_0}}}\exp[S_r g(\xi_1,...,\xi_r,\beta(\xi_r))]
     \end{aligned}
   \end{align*}

Our goal is to show that the last summation is bounded by $1$. This is the main technical point of the proof, and it is proved in the following lemma.
\begin{lem}For all $r \in \mathbb N$, the quantity

\begin{equation}\label{Q(r)}
  Q(r):=\sum_{\mathclap{\substack{\hspace{1cm}\xi_1,...,\xi_{r-1}\in \mathcal{A}\\\quad \xi_r\in\mathcal{A}_0}}}\exp[S_r g(\xi_1,...,\xi_r,\beta(\xi_r))]
\end{equation}
satisfies $Q(r)\leq 1$.
\end{lem}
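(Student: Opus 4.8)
The plan is to evaluate the Birkhoff sum exactly from the pattern of forbidden transitions, and then to recognise $Q(r)$ as a renewal sum. Write $b_\ell=\log|\L_\ell(Y)|$ and $S(\ell)=\sum_{j=1}^{\ell}a_j$. Fix a word with $\xi_r\in\mathcal A_0$ and let $0=p_0<p_1<\dots<p_{m-1}<p_m=r$ be the positions where $A_{\xi_i\xi_{i+1}}=0$, the last failure $p_m=r$ being forced by $A_{\xi_r\beta_1(\xi_r)}=0$; put $d_k=p_{k+1}-p_k$, so that $(d_0,\dots,d_{m-1})$ is a composition of $r$. Shifting and locating the first failure of $\sigma^j(\xi_1,\dots,\xi_r,\beta(\xi_r))$ exactly as in the derivation of \eqref{supS_n}, the summands inside the $k$-th maximal admissible block run through $-a_{d_k},-a_{d_k-1},\dots,-a_1$, whence
\[
\exp S_r g(\xi_1,\dots,\xi_r,\beta(\xi_r))=\prod_{k=0}^{m-1}e^{-S(d_k)} .
\]

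I would then sort the words in $Q(r)$ by their composition. A maximal admissible block of length $\ell$ is a word in $\L_\ell(Y)$, and a length-one block is a single symbol of $\mathcal A_0$; dropping the requirement that the transitions \emph{between} blocks be forbidden only inflates the count, so the number of words with a given composition is at most $\prod_k c_{d_k}$, where $c_1=|\mathcal A_0|$ and $c_\ell=|\L_\ell(Y)|$ for $\ell\ge2$. With $w_\ell:=c_\ell e^{-S(\ell)}$ this gives
\[
Q(r)\le\sum_{\text{compositions of }r}\ \prod_{k=0}^{m-1}w_{d_k}=:C_r,\qquad C_r=\sum_{\ell=1}^{r}w_\ell\,C_{r-\ell},\ \ C_0=1 .
\]
A one-line strong induction shows $C_r\le1$ for every $r$ as soon as $\sum_{\ell\ge1}w_\ell\le1$, so the entire lemma reduces to this single inequality.

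The heart of the argument is the bound $\sum_{\ell\ge1}w_\ell\le1$, and this is where Lemma \ref{sub-additive} is used. For $\ell=\tfrac{(n-1)n}{2}+k$ with $0\le k<n$, the lemma applied to the sub-additive sequence $(b_\ell)$ gives $b_\ell\le (b_1+\dots+b_{n-1})+\tfrac{k}{n}b_n$, while grouping the $a_j$ by the triangular blocks on which $\tfrac1n b_n$ is constant yields $S(\ell)=\sum_{m=2}^{n-1}b_m+\tfrac{k+1}{n}b_n+c\sum_{i=1}^{\ell}\tfrac{1}{\sqrt i}$. The intermediate terms $b_2,\dots,b_{n-1}$ cancel, leaving the clean estimate
\[
b_\ell-S(\ell)\ \le\ b_1-\tfrac1n b_n-c\sum_{i=1}^{\ell}\tfrac{1}{\sqrt i} .
\]

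Producing this cancellation is the main obstacle, since it is exactly what the otherwise opaque definition of $a_j$ (triangular blocks plus the $\tfrac{c}{\sqrt j}$ correction) is engineered to give, and it rests on the earlier sub-additive lemma. Once it is in hand the summation closes. For $\ell\ge2$ I would drop $-\tfrac1n b_n\le0$, use $b_1\le\log d=h(\Sigma^+)<c$, and get $w_\ell=e^{b_\ell-S(\ell)}<\exp\!\big(-c\sum_{i=2}^{\ell}\tfrac{1}{\sqrt i}\big)$, a series summing to below $1-3^{-1/2}$ whenever $c>2$. For $\ell=1$ the bound $|\mathcal A_0|\le d<e^{c}$ gives $w_1<e^{-\frac12 b_2}\le 3^{-1/2}$, the last step using that a positive-entropy $1$-step SFT has at least three admissible words of length two, i.e. $|\L_2(Y)|\ge3$. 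Adding the two contributions yields $\sum_{\ell\ge1}w_\ell<1$. Thus $c>\max\{2,h(\Sigma^+)\}$ is calibrated precisely so that, after the sub-additive cancellation, the $c\sum 1/\sqrt j$ tail dominates both the growth of $|\L_\ell(Y)|$ and the boundary term from $\mathcal A_0$, while the positive-entropy hypothesis enters through $|\L_2(Y)|\ge3$.
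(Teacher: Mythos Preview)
Your proof is correct and follows essentially the same strategy as the paper: both decompose words by the positions of forbidden transitions to obtain a renewal-type recursion $Q(r)\le\sum_{\ell}w_\ell\,Q(r-\ell)$ with $w_\ell=|\L_\ell(Y)|e^{-S(\ell)}$, then invoke Lemma~\ref{sub-additive} together with the positive-entropy hypothesis to show $\sum_\ell w_\ell\le1$, after which strong induction closes. The only cosmetic differences are that the paper carries out the splitting step-by-step rather than packaging it as a sum over compositions, and handles the $\ell=1$ boundary term via $|\L_1(Y)|\ge2$ rather than your $|\L_2(Y)|\ge3$.
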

\begin{proof}
We argue recursively. For $r=1$ we have
\begin{equation}\label{Q1}
Q(1)=\sum_{\xi_1\in\cA_0}\exp g(\xi_1,\beta(\xi_1))=|\cA_0|\exp(-a_1)=|\cA_0|\cdot |\L_1(Y)|\exp(-c).
\end{equation}
Since $c>h(\Sigma^+)=\log |\cA|$, the last expression is less than one, so $Q(1)\le 1$. For $r>1$ we split
\begin{equation}\label{Q(r)Step1}
  Q(r) =\sum_{\mathclap{\substack{\quad(\xi_1,\xi_2)\in\L_2\\\hspace{1cm}\xi_3,...,\xi_{r-1}\in \mathcal{A}\\\quad \xi_r\in\mathcal{A}_0}}}\exp[S_r g(\xi_1,...,\xi_r,\beta(\xi_r))]+\sum_{\mathclap{\substack{\quad(\xi_1,\xi_2)\notin\L_2\\\hspace{1cm}\xi_3,...,\xi_{r-1}\in \mathcal{A}\\\quad \xi_r\in\mathcal{A}_0}}}\exp[S_r g(\xi_1,...,\xi_r,\beta(\xi_r))]\\
\end{equation}
When $(\xi_1,\xi_2)\notin\L_2(Y)$, $A_{\xi_1\xi_2}=0$ and hence $g(\xi_1,...,\xi_r,\beta(\xi_r))=-a_1$. We can write $S_r g(\xi_1,...,\xi_r,\beta(\xi_r))=-a_1+S_{r-1} g(\xi_2,...,\xi_r,\beta(\xi_r))$ and estimate the second sum in (\ref{Q(r)Step1}) as
\begin{align*}
\sum_{\mathclap{\substack{\qquad(\xi_1,\xi_2)\notin\L_2\\\hspace{1cm}\xi_3,...,\xi_{r-1}\in \mathcal{A}\\\quad \xi_r\in\mathcal{A}_0}}}\exp[S_r g(\xi_1,...,\xi_r,\beta(\xi_r))]&\le |\cA|\exp(-a_1)\sum_{\mathclap{\substack{\hspace{1cm}\xi_2,...,\xi_{r-1}\in \mathcal{A}\\\quad \xi_r\in\mathcal{A}_0}}}\exp[S_{r-1} g(\xi_2,...,\xi_r,\beta(\xi_r))]\\
&\le |\cA|\exp(-a_1)Q(r-1).
\end{align*}
If $r>2$, we can further split the first sum in (\ref{Q(r)Step1}). For $n>1$ we consider a set of tuples $\Ca_n=\{(\xi_1,...,\xi_n)\in\cA_n:(\xi_1,...,\xi_{n-1})\in\L_{n-1}(Y),\text{ but } (\xi_1,...,\xi_n)\notin\L_n(Y)\}$. Then
\begin{multline}\label{Q(r)Split1}
  \sum_{\mathclap{\substack{\quad(\xi_1,\xi_2)\in\L_2\\\hspace{1cm}\xi_3,...,\xi_{r-1}\in \mathcal{A}\\\quad \xi_r\in\mathcal{A}_0}}}\exp[S_r g(\xi_1,...,\xi_r,\beta(\xi_r))] =\sum_{\mathclap{\substack{\hspace{1cm}(\xi_1,\xi_2,\xi_3)\in\L_3\\\hspace{1cm}\xi_4,...,\xi_{r-1}\in \mathcal{A}\\\quad \xi_r\in\mathcal{A}_0}}}\exp[S_r g(\xi_1,...,\xi_r,\beta(\xi_r))]\\
  +\sum_{\mathclap{\substack{\hspace{1cm}(\xi_1,\xi_2,\xi_3)\in\Ca_3\\\hspace{1cm}\xi_4,...,\xi_{r-1}\in \mathcal{A}\\\quad \xi_r\in\mathcal{A}_0}}}\exp[S_r g(\xi_1,...,\xi_r,\beta(\xi_r))].
\end{multline}
When $(\xi_1,\xi_2,\xi_3)\in\Ca_3$, $A_{\xi_1\xi_2}=1$ and $A_{\xi_2\xi_3}=0$. Therefore, $g(\xi_1,...,\xi_r,\beta(\xi_r))=-a_2$, $g(\Sigma^+(\xi_1,...,\xi_r,\beta(\xi_r)))=-a_1$ and we can partially evaluate $S_r g(\xi_1,...,\xi_r,\beta(\xi_r))$,
$$S_r g(\xi_1,...,\xi_r,\beta(\xi_r))=-a_2-a_1+S_{r-2}g(\xi_3,...,\xi_r,\beta(\xi_r)).$$
As before, we estimate the second sum in (\ref{Q(r)Split1}).
\begin{align*}
\sum_{\mathclap{\substack{\hspace{1cm}(\xi_1,\xi_2,\xi_3)\in\Ca_3\\\hspace{1cm}\xi_4,...,\xi_{r-1}\in \mathcal{A}\\\quad \xi_r\in\mathcal{A}_0}}}\exp[S_r g(\xi_1,...,\xi_r,\beta(\xi_r))]
&\le |\L_2(Y)|\exp(-a_2-a_1)\sum_{\mathclap{\substack{\hspace{1cm}\xi_3,...,\xi_{r-1}\in \mathcal{A}\\\quad \xi_r\in\mathcal{A}_0}}}\exp[S_{r-2} g(\xi_3,...,\xi_r,\beta(\xi_r))]\\
&\le |\L_2(Y)|\exp(-a_2-a_1)Q(r-2).
\end{align*}
Combining the last inequality with (\ref{Q(r)Step1}) and (\ref{Q(r)Split1}) we obtain
\begin{multline}\label{Q(r)Step2}
  Q(r) \le \sum_{\mathclap{\substack{\hspace{1cm}(\xi_1,\xi_2,\xi_3)\in\L_3\\\hspace{1cm}\xi_4,...,\xi_{r-1}\in \mathcal{A}\\\quad \xi_r\in\mathcal{A}_0}}}\exp[S_r g(\xi_1,...,\xi_r,\beta(\xi_r))]\\
  +|\L_2(Y)|\exp(-a_2-a_1)Q(r-2)+ |\cA|\exp(-a_1)Q(r-1).
\end{multline}
If $r>3$, we can split the first sum above in a similar way as in (\ref{Q(r)Split1}). Continuing this process, at step $s<r$  we have
\begin{multline}\label{Q(r)Step_s}
  Q(r) \le \sum_{\mathclap{\substack{\hspace{1cm}(\xi_1,...,\xi_s)\in\L_s\\\hspace{1cm}\xi_{s+1},...,\xi_{r-1}\in \mathcal{A}\\\quad \xi_r\in\mathcal{A}_0}}}\exp[S_r g(\xi_1,...,\xi_r,\beta(\xi_r))]
+|\L_{s-1}(Y)|\exp(-a_{s-1}-...-a_1)Q(r-({s-1}))+...\\
+|\L_2(Y)|\exp(-a_2-a_1)Q(r-2)+ |\cA| \exp(-a_1)Q(r-1).
\end{multline}
We split the first sum
\begin{multline}\label{Q(r)Split_s}
  \sum_{\mathclap{\substack{\hspace{1cm}(\xi_1,...,\xi_s)\in\L_s\\\hspace{1cm}\xi_{s+1},...,\xi_{r-1}\in \mathcal{A}\\\quad \xi_r\in\mathcal{A}_0}}}\exp[S_r g(\xi_1,...,\xi_r,\beta(\xi_r))] =\sum_{\mathclap{\substack{\hspace{1cm}(\xi_1,...,\xi_{s+1})\in\L_{s+1}\\\hspace{1cm}\xi_{s+2},...,\xi_{r-1}\in \mathcal{A}\\\quad \xi_r\in\mathcal{A}_0}}}\exp[S_r g(\xi_1,...,\xi_r,\beta(\xi_r))]\\
  +\sum_{\mathclap{\substack{\hspace{1cm}(\xi_1,...,\xi_{s+1})\in\Ca_{s+1}\\\hspace{1cm}\xi_{s+2},...,\xi_{r-1}\in \mathcal{A}\\\quad \xi_r\in\mathcal{A}_0}}}\exp[S_r g(\xi_1,...,\xi_r,\beta(\xi_r))].
\end{multline}
Since $A_{\xi_i\xi_{i+1}}=1$ for $i<s$ and $A_{\xi_i\xi_{i+1}}=0$ for $i=s$ whenever $(\xi_1,...,\xi_{s+1})\in\Ca_{s+1}$ we can evaluate
$$S_r g(\xi_1,...,\xi_r,\beta(\xi_r))=-a_s-...-a_1+S_{r-s}g(\xi_{s+1},...,\xi_r,\beta(\xi_r))$$
and subsequently estimate the second sum in (\ref{Q(r)Split_s}).
\begin{align*}
\sum_{\mathclap{\substack{\hspace{1.5cm}(\xi_1,...,\xi_{s+1})\in\Ca_{s+1}\\\hspace{1.5cm}\xi_{s+2},...,\xi_{r-1}\in \mathcal{A}\\\qquad \xi_r\in\mathcal{A}_0}}}\exp[S_r g(\xi_1,...,\xi_r,\beta(\xi_r))]
&\le |\L_s(Y)|\exp(-a_s-...-a_1)\sum_{\mathclap{\substack{\hspace{1cm}\xi_{s+1},...,\xi_{r-1}\in \mathcal{A}\\\quad \xi_r\in\mathcal{A}_0}}}\exp[S_{r-s} g(\xi_3,...,\xi_r,\beta(\xi_r))]\\
&\le |\L_s(Y)|\exp(-a_s-...-a_1)Q(r-s).
\end{align*}
The last inequality together with (\ref{Q(r)Split_s}) and \ref{Q(r)Step_s}) imply that
\begin{multline}\label{Q(r)Step_s_complete}
  Q(r) \le \sum_{\mathclap{\substack{\hspace{1cm}(\xi_1,...,\xi_{s+1})\in\L_{s+1}\\\hspace{1cm}\xi_{s+2},...,\xi_{r-1}\in \mathcal{A}\\\quad \xi_r\in\mathcal{A}_0}}}\exp[S_r g(\xi_1,...,\xi_r,\beta(\xi_r))]
+|\L_s(Y)|\exp(-a_{s}-...-a_1)Q(r-s)+...\\
+|\L_2(Y)|\exp(-a_2-a_1)Q(r-2)+|\cA|\exp(-a_1)Q(r-1),
\end{multline}
which completes step $s$.
After $r-1$ steps the inequality (\ref{Q(r)Step_s_complete}) holds with $s=r-1$. In this case  the expression in the first summation is constant. Precisely, for any tuple $(\xi_1,...,\xi_r)\in\L_r$ we have
$A_{\xi_i\xi_{i+1}}=1$ for $i<r$ and $A_{\xi_r\beta(\xi_r)_1}=0$, and hence we can evaluate completely
$$S_r g(\xi_1,...,\xi_r,\beta(\xi_r))=-a_r-...-a_1.$$
We finally arrive at the following
\begin{multline}\label{Q(r)final}
  Q(r) \le |\L_r(Y)|\exp(-a_{r}-...-a_1)
+|\L_{r-1}(Y)|\exp(-a_{r-1}-...-a_1)Q(1)+...\\
+|\L_2(Y)|\exp(-a_2-a_1)Q(r-2)+|\cA|\exp(-a_1)Q(r-1).
\end{multline}

Consider $\sum\limits_{j=1}^s a_j$ with $s\in\bN$. For each $s$ find $n\in\bN$ such that $\frac{n(n-1)}{2}\le s < \frac{n(n+1)}{2}$ and let $k=s-\frac{n(n-1)}{2}$. The sequence $(a_j)$ is defined in such a way that
\begin{equation}\label{sum of a_j}
  \sum\limits_{j=1}^s a_j=\log |\L_1(Y)|+...+\log L_{n-1}+\frac{k}{n}\log |\L_n(Y)|+\sum\limits_{j=1}^s\frac{c}{\sqrt{j}}.
\end{equation}

Note that the first $i$ symbols and the last $j$ symbols of any block in $\L_{i+j}(Y)$ form admissible words of sizes $i$ and $j$ respectively. Since this pair of subwords is determined uniquely by the word of size $(i+j)$, we have $|\L_{i+j}(Y)|\le |\L_i(Y)||\L_j(Y)|$. Therefore, $\log |\L_{i+j}(Y)|\le \log |\L_i(Y)|+\log |\L_j(Y)|$ for any $i,j\in\bN$ and the sequence $(\log |\L_n(Y)|)_n$ is sub-additive. We apply Lemma \ref{sub-additive} to the equation (\ref{sum of a_j}), use the fact that $s=1+...+(n-1)+k$, and obtain
\begin{equation}\label{sum of a_j_2}
  \sum\limits_{j=1}^s a_j\ge \log |\L_s(Y)|+\sum\limits_{j=1}^s\frac{c}{\sqrt{j}}.
\end{equation}
We also estimate $\sum_{j=1}^s\frac{c}{\sqrt{j}}\ge \sum_{j=1}^s\frac{c}{\sqrt{s}}\ge c\sqrt{s}$ and see that
\begin{equation}\label{sum of a_j_final}
  \sum\limits_{j=1}^s a_j\ge \log |\L_s(Y)|+c\sqrt{s}.
\end{equation}

We use (\ref{sum of a_j_final}) to continue with our estimates of $Q(r)$ in (\ref{Q(r)final}).
\begin{align}\label{Q_r continue}
\begin{split}
 Q(r) &\le
 \begin{aligned}[t]
   &|\L_{r}(Y)|\exp(-\log |\L_r(Y)|-c\sqrt{r})\\
   &+|\L_{r-1}(Y)|\exp(-\log |\L_{r-1}(Y)|-c\sqrt{r-1})Q(1)\\
   &\vdots\\
   &+|\L_2(Y)|\exp(-\log |\L_2(Y)|-c\sqrt{2})Q(r-2)\\
   &+|\cA| \exp(-\log |\L_1(Y)|-c)Q(r-1)\\
 \end{aligned}\\
&= \begin{aligned}[t]&\exp(-c\sqrt{r})\\
                &+\exp(-c\sqrt{r-1})Q(1)\\
                &\vdots\\
                &+\exp(-c\sqrt{2})Q(r-2)\\
                &+|\cA| \exp(-\log |\L_1(Y)|-c)Q(r-1).
   \end{aligned}
\end{split}
\end{align}
Recall that $c\ge h(\Sigma^+)=|\cA|$ and thus  $|\cA|\exp(-\log |\L_1(Y)|-c)\le \exp(-\log |\L_1(Y)|)$. On the other hand, in order for the entropy of $Y$ to be positive, it must have at least two admissible symbols, i.e. $|\L_1(Y)|\ge 2$. It follows that the expression in the last line of (\ref{Q_r continue}) is bounded by $\frac12 Q(r-1)$.

We are ready to complete the proof with an induction argument. We know that $Q(1)\le 1$, see (\ref{Q1}). Under the assumption that $Q(s)\le 1$ for all $s<r$, (\ref{Q_r continue}) implies that
\begin{equation}\label{Q_r final2}
  Q(r)\le \sum_{s=2}^{r}\exp(-c\sqrt{s})+\frac12.
\end{equation}
Using the standard integral estimate, one can show that whenever $c\ge 2$ the series in (\ref{Q_r final2}) converges and $\sum\limits_{s=2}^{\infty}\exp(-c\sqrt{s})<\frac12$. Therefore, $Q(r)<1$. The statement of the lemma is now follows by the principle of strong induction. \end{proof}

Recall that
$$Z_n(g)\le |\L_n(Y)|\exp(-nh(Y)) +\sum_{r=1}^{n-1}|\L_{n-r}(Y)|\exp(-(n-r)h(Y))\,\,Q(r).$$
Since $Q(r)<1$, we use (\ref{Perron-Frobenius}) and obtain
\begin{equation*}
  Z_n(g) \le \sum_{r=0}^{n-1}|\L_{n-r}(Y)|\exp(-(n-r)h(Y))\le nC_2
\end{equation*}
It follows that
\begin{equation*}
  P(g)=\lim_{n\to\infty}\frac1n Z_n(g)\le 0,
\end{equation*}
which completes the proof of Proposition \ref{non-uniqueMME}.
%\end{proof}
\bibliographystyle{plain}
\bibliography{nonuniqueMMEbiblio}

\begin{thebibliography}{10}

\bibitem{BW}
R.~Bowen and P.~Walters.
\newblock Expansive one-parameter flows.
\newblock {\em J. Differential Equations}, 12:180--193, 1972.

\bibitem{BG11}
A.I. Bufetov and B.M. Gurevich.
\newblock {Existence and uniqueness of the measure of maximal entropy for the
  Teichm\"uller flow on the moduli space of Abelian differentials}.
\newblock {\em SB Math}, 202(7):935--970, 2011.

\bibitem{BFSV}
J.~Buzzi, T.~Fisher, M.~Sambarino, and C.~V{\'a}squez.
\newblock Maximal entropy measures for certain partially hyperbolic, derived
  from {A}nosov systems.
\newblock {\em Ergodic Theory Dynam. Systems}, 32(1):63--79, 2012.

\bibitem{CP16}
V.~Climenhaga and R.~Pavlov.
\newblock One-sided almost specification and intrinsic ergodicity.
\newblock {\em Ergodic Theory Dynam. Systems (to appear)}, 2018.
\newblock Published online as Firstview article.

\bibitem{CT16}
Vaughn Climenhaga and Daniel~J. Thompson.
\newblock Unique equilibrium states for flows and homeomorphisms with
  non-uniform structure.
\newblock {\em Adv. Math.}, 303:744--799, 2016.

\bibitem{CLT16}
D.~Constantine, J-F. Lafont, and D.~J. Thompson.
\newblock The weak specification property for geodesic flows on {CAT(-1)}
  spaces.
\newblock {\em Groups, Geometry and Dynamics (to appear)}, 2019.
\newblock Preprint available at arXiv:1606.06253.

\bibitem{GR17}
K.~Gelfert and R.~Ruggiero.
\newblock Geodesic flows modeled by expansive flows.
\newblock {\em Proceedings of Edinburgh Mathematical Society}, 62(1):61--95,
  2019.

\bibitem{nH13}
Nicolai T.~A. Haydn.
\newblock Phase transitions in one-dimensional subshifts.
\newblock {\em Discrete Contin. Dyn. Syst.}, 33(5):1965--1973, 2013.

\bibitem{fH77}
Franz Hofbauer.
\newblock Examples for the nonuniqueness of the equilibrium state.
\newblock {\em Trans. Amer. Math. Soc.}, 228(223--241.), 1977.

\bibitem{IJ13}
Godofredo Iommi and Thomas Jordan.
\newblock Phase transitions for suspension flows.
\newblock {\em Comm. Math. Phys.}, 320(2):475--498, 2013.

\bibitem{IJT15}
Godofredo Iommi, Thomas Jordan, and Mike Todd.
\newblock Recurrence and transience for suspension flows.
\newblock {\em Israel J. Math.}, 209(2):547--592, 2015.

\bibitem{wK74}
Wolfgang Krieger.
\newblock On the uniqueness of the equilibrium state.
\newblock {\em Math. Systems Theory}, 8(2):97--104, 1974/75.

\bibitem{KOR}
Dominik Kwietniak, Piotr Oprocha, and Michal Rams.
\newblock On entropy of dynamical systems with almost specification.
\newblock {\em Israel J. Math.}, 213(1):475--503, 2016.

\bibitem{LM95}
D.~Lind and B.~Marcus.
\newblock {\em An Introduction to symbolic dynamics and coding}.
\newblock Cambridge University Press, 1995.

\bibitem{MP82}
Nelson~G. Markley and Michael~E. Paul.
\newblock Equilibrium states of grid functions.
\newblock {\em Trans. Amer. Math. Soc.}, 274(1):169--191, 1982.

\bibitem{PP90}
W.~Parry and M.~Pollicott.
\newblock {\em Zeta functions and the periodic orbit structure of hyperbolic
  dynamics}.
\newblock Number 187-188 in Ast\'erisque. Soc. Math. France, 1990.

\bibitem{rP16}
Ronnie Pavlov.
\newblock On intrinsic ergodicity and weakenings of the specification property.
\newblock {\em Adv. Math.}, 295:250--270, 2016.

\bibitem{kP86}
Karl Petersen.
\newblock Chains, entropy, coding.
\newblock {\em Ergodic Theory Dynam. Systems}, 6(3):415--448, 1986.

\bibitem{vS98}
S.~V. Savchenko.
\newblock Special flows constructed from countable topological {M}arkov chains.
\newblock {\em Funktsional. Anal. i Prilozhen.}, 32(1):40--53, 96, 1998.

\bibitem{sS96}
Sergey~Valerievich Savchenko.
\newblock Equilibrium states with incomplete supports and periodic
  trajectories.
\newblock {\em Mathematical Notes}, 59(2):163--179, 1996.

\bibitem{rU12}
R.~Ures.
\newblock Intrinsic ergodicity of partially hyperbolic diffeomorphisms with a
  hyperbolic linear part.
\newblock {\em Proc. Amer. Math. Soc.}, 140(6):1973--1985, 2012.

\bibitem{pW82}
P.~Walters.
\newblock {\em An Introduction to Ergodic Theory}, volume~79 of {\em Graduate
  Texts in Mathematics}.
\newblock Springer, New York, 1982.

\end{thebibliography}

\end{document}